\documentclass[11pt]{article}
\usepackage[margin=1in]{geometry}
\usepackage{amsmath,amssymb,amsthm,mathtools}
\usepackage[hidelinks]{hyperref}
\usepackage{microtype}

\newtheorem{theorem}{Theorem}
\newtheorem{lemma}{Lemma}
\newtheorem{corollary}{Corollary}

\title{Edge-addition monotonicity of positive $p$-energy fails for every $p\ge1$}
\author{Koyar Afrasyab}
\date{}

\begin{document}
\maketitle

\begin{abstract}
For a graph $G$ with adjacency eigenvalues $\lambda_i$, define the positive $p$-energy by
\[
\mathcal E_p^+(G)=\sum_{\lambda_i>0}\lambda_i^p.
\]
At a 2021 AIM workshop, Guo conjectured that the positive square energy $s^+=\mathcal E_2^+$ should inherit the familiar edge-addition monotonicity of the spectral radius, $\rho(G+uv)\ge\rho(G)$.  That conjecture was subsequently shown to fail at $p=2$.  Tang, Liu, and Wang then introduced positive $p$-energy, proved nonmonotonicity for every $1\le p<3$, and in version 3 of their preprint (26 March 2025) explicitly conjectured that monotonicity should hold for $p\ge3$.  We disprove this conjectured high-exponent extension completely: for every real $p>2$ there are infinitely many connected graphs $G$ and nonedges $uv$ such that
\[
\mathcal E_p^+(G+uv)<\mathcal E_p^+(G).
\]
Together with the Tang--Liu--Wang counterexamples below $3$, this shows that no exponent $p\ge1$ restores the spectral-radius-style monotonicity: positive $p$-energy can decrease under the addition of an edge for every real $p\ge1$.

The construction is a chain of clique blocks joined by regular bipartite graphs. An equitable quotient converges to $Q=I+cA(P_k)$. For noninteger $p$, a binomial-series sign argument for a fractional power of $I-cA(P_k)$ gives the required negative endpoint entry. At integer exponents, choosing $c$ across the first spectral threshold leaves exactly one negative eigenvalue, and path locality forces the positive spectral contribution to have negative sign. We also give a fully rational $38$-vertex certificate at $p=4$ and determine the complete failure interval of a fixed $17$-vertex counterexample at $p=3$.
\end{abstract}

\section{Introduction}
For a simple graph $G$ and a real number $p\ge1$, let
\[
\mathcal E_p^+(G)=\sum_{\lambda_i(G)>0}\lambda_i(G)^p.
\]
The surrounding literature begins with classical graph energy and its Schatten-norm extensions; see, for example, Nikiforov \cite{Nikiforov2012,Nikiforov2016} and Arizmendi--Guerrero \cite{ArizmendiGuerrero2023}.  The separate positive and negative square energies were developed in connection with the Elphick--Farber--Goldberg--Wocjan conjecture \cite{ElphickEtAl2016}; subsequent work includes Abiad et al.\ \cite{AbiadEtAl2023}, Elphick--Linz \cite{ElphickLinz2024}, and the vertex-partitioning inequalities of Akbari et al.\ \cite{AkbariEtAl2025}.  The square-energy conjecture itself was recently resolved by Liu, Tang, and Zhang \cite{LiuTangZhang2026}.  Positive and negative $p$-energies have also appeared in recent extremal and coloring results \cite{ChenWangZhang2026,TangElphickZhang2026}.

A more direct lineage for the present problem comes from spectral-radius monotonicity.  If $uv\notin E(G)$, then Perron--Frobenius monotonicity gives $\rho(G+uv)\ge\rho(G)$.  At the 2021 AIM workshop on spectral graph and hypergraph theory, Guo proposed that the positive square energy should satisfy the analogous inequality
\[
s^+(G+uv)\ge s^+(G),
\]
explicitly as a strengthening of this spectral-radius phenomenon \cite{CioabaGuoSrivastava2021}.  The conjecture is false: Abiad et al. gave an explicit graph for which deleting one edge increases $s^+$ \cite[Example~2.6]{AbiadEtAl2023}, equivalently an edge addition that decreases it.

Tang, Liu, and Wang \cite{TangLiuWang2026} generalized the problem from $s^+=\mathcal E_2^+$ to positive $p$-energy.  They constructed counterexamples for every $1\le p<3$.  Importantly, version 3 of their arXiv preprint did more than pose an open question: after reporting that their constructions did not refute monotonicity for $p\ge3$, they explicitly reformulated Guo's conjecture as Conjecture~3, asserting that
\[
\mathcal E_p^+(G+uv)\ge\mathcal E_p^+(G)\qquad(p\ge3),
\]
for every graph and every missing edge \cite[Conjecture~3]{TangLiuWang2025v3}.  The published version phrases the remaining regime as an open monotonicity problem \cite{TangLiuWang2026}.  Thus the issue is a conjectured extension of spectral-radius monotonicity, not merely an isolated inequality question.

We disprove that conjectured extension completely.  The local monotonicity problem is distinct from the global extremal problem of minimizing $p$-energy or positive $p$-energy among connected graphs; recent path-minimality results are given in \cite{LiuTangPathEnergy2026,LiuTang2026}.

\begin{theorem}\label{thm:main}
For every real $p>2$, there are infinitely many connected simple graphs $G$ and nonedges $uv$ such that
\[
\mathcal E_p^+(G+uv)<\mathcal E_p^+(G).
\]
\end{theorem}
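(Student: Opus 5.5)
The plan follows the construction sketched in the abstract. Fix a path length $k$ and a parameter $c>0$. For a large integer $n$, let $G_n$ consist of $k$ clique blocks $B_1,\dots,B_k$, each a copy of $K_n$. Consecutive blocks $B_j,B_{j+1}$ are joined by a $d$-regular bipartite graph with $d=\lfloor cn\rfloor$. The partition into blocks is equitable, with quotient matrix $(n-1)I+d\,A(P_k)$, so after dividing by $n$ it converges to $Q=I+cA(P_k)$. The new edge $uv$ joins a vertex of $B_1$ to a non-neighbour in $B_2$; any other endpoint-local placement that perturbs a single quotient entry would also serve.

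\textbf{Step 1: reduce to the quotient.} The non-quotient eigenvalues of $G_n$ (the spectrum orthogonal to block-constant vectors) are $O(\sqrt n)$ or bounded. For example, the $K_n$ parts contribute eigenvalue $-1$, and the bipartite parts contribute eigenvalues of order $d$ only in the quotient directions. Their total contribution to $\mathcal E_p^+$ is therefore $o(n^p)$ in the regime that matters. The change caused by adding $uv$ must be handled more finely than that, so I would use first-order perturbation. Adding $uv$ changes $\mathcal E_p^+$ by
\[
p\sum_{\lambda_i>0}\lambda_i^{p-1}(x_i(u)x_i(v))\cdot 2+\text{(second order)}.
\]
For the quotient eigenvectors, $x_i(u)x_i(v)\approx y_i(1)y_i(2)/n$, where $y_i$ are the unit eigenvectors of $Q$. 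The leading change is thus $2p\,n^{p-2}\,[f(Q)]_{12}$ with $f(t)=t_+^{p-1}$. The non-quotient eigenvectors and the second-order terms contribute lower order, $O(n^{p-3})$ or $O(n^{p-5/2})$; this is where $p>2$ is used. It then suffices to find $c$ and $k$ with $[f(Q)]_{12}<0$.

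\textbf{Step 2: sign of the endpoint entry.} For $c<1/(2\cos(\pi/(k+1)))$, $Q$ is positive definite, so $f(Q)=Q^{p-1}=(I+cA)^{p-1}$. Consider the case where $p-1$ is not an integer. Replace $c$ by $-c$, which is a diagonal similarity by $\pm1$ signs on the path and flips the sign of entry $(1,2)$. Then expand $(I-cA)^{p-1}=\sum_m\binom{p-1}{m}(-c)^m A^m$. The $(1,2)$ entry of $A^m$ vanishes for even $m$, so only odd $m$ contribute. For $m$ large compared with $p-1$, the signs of $\binom{p-1}{m}(-1)^m$ stabilise. The aim is to choose $c$ near the radius of convergence and $k$ large, so that the tail (where $(A^m)_{12}$ grows like $(2\cos(\pi/(k+1)))^m$) dominates with the favourable sign. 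This makes $[Q^{p-1}]_{12}<0$ for one of the two sign choices. Since the actual construction has $c>0$, I would instead realise the needed sign directly, if necessary by moving the added edge to block pair $(1,2)$ versus having it lie inside the complement structure.

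Integer $p$ is the harder case. For integer $p$, $Q^{p-1}$ is a polynomial with positive coefficients, so there we must take $c$ just past the first threshold. Then $Q$ has exactly one negative eigenvalue $\mu<0$, with eigenvector $y$, and
\[
f(Q)=Q^{p-1}-\mu^{p-1}yy^{T}.
\]
Path locality matters here: $(Q^{p-1})_{12}$ is a fixed polynomial in $c$, of size $O(c^{p-1})$ from walks. Meanwhile $y$ is the alternating-sign Perron-type vector of $cA-\lambda$, so $y_1y_2<0$. Hence $-\mu^{p-1}y_1y_2$ has sign $(-1)^{p}$ times a positive quantity. The delicate point is to choose $k$ large with $p<k$, so that $(A^{j})_{12}$ for $j\le p-1$ is computed without boundary effects. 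One then verifies that the spectral correction outweighs it. In the even-$p$ case, the correction $-\mu^{p-1}y_1y_2$ is negative, which gives the required sign. In the odd-$p$ case, a second negative eigenvalue or a different endpoint pair is needed. I expect this integer case to be the main obstacle. I would first try to check it with an explicit small $k$ and exact arithmetic, as the rational $p=4$ and $p=3$ certificates suggest.

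\textbf{Step 3: infinitely many examples.} The conclusion holds for all large $n$, which gives infinitely many connected graphs.
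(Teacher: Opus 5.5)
\textbf{The construction fails.} With $uv$ joining a vertex of $B_1$ to a vertex of $B_2$, the leading coefficient you derive, $(Q_+^{p-1})_{12}$, is strictly positive for every $k\ge2$, every $c>0$ and every $p>1$. So neither your noninteger nor your integer sign argument can succeed.

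To see this, let $A=A(P_k)$ have eigenvalues $\alpha_j$ and unit eigenvectors $w_j$, and put $g(t)=t_+^{p-1}$. We have $Ae_1=e_2$, $A$ commutes with $g(Q)$, and $\sum_j w_j(1)^2\alpha_j=A_{11}=0$. Hence
\[
(Q_+^{p-1})_{12}=(A\,g(Q))_{11}=\sum_j w_j(1)^2\,\alpha_j\bigl[g(1+c\alpha_j)-g(1)\bigr].
\]
Since $g$ is nondecreasing and $c>0$, every summand is $\ge0$. The summand with the largest $\alpha_j>0$ is $>0$.

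This has three consequences.
\begin{itemize}
\item In the noninteger case, no choice of $c$ near $1/\rho(A)$ or of large $k$ makes the binomial tail win. The diagonal-similarity sign flip corresponds to $c<0$, which no graph realizes.
\item In the integer case, the correction $-\mu^{p-1}y_1y_2$ never outweighs $(Q^{p-1})_{12}$. That term contains $(p-1)c$ and does not vanish by locality, because blocks $1$ and $2$ are at distance $1<p-1$.
\item If the bipartite graphs are expanders and $p>3$, the estimates you sketch actually show that adding this edge \emph{increases} $\mathcal E_p^+$ for all large $n$.
\end{itemize}

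\textbf{The missing idea} is that the quotient distance between the ends of the added edge must be tuned to the exponent. The paper adds two new vertices $u,v$. It joins $u$ to all of $C_1$ and $v$ to all of $C_k$, with $k=\lfloor p\rfloor$, and then adds $uv$.
\begin{itemize}
\item The partition $\{u\},C_1,\dots,C_k,\{v\}$ is equitable both before and after adding $uv$. So the restriction to the complement of the block-constant space is literally unchanged and cancels.
\item This also removes your unproved Step~1 claims. For an arbitrary $d$-regular bipartite graph, e.g.\ a disjoint union of copies of $K_{d,d}$, the non-quotient eigenvalues can be of order $n$. Even an $O(\sqrt n)$ bulk is not obviously $o(n^{p-2})$ in the $(u,v)$ entry when $p\le3$.
\item Because $u,v$ lie outside the blocks, their eigenvector entries are about $(w_j)_1/(\sqrt n\,\theta_j)$. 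This lowers the exponent, and the limit becomes $2p(Q_+^{p-3})_{1k}$.
\item For noninteger $p$, the window $k-3<p-3<k-2$ forces every nonzero term of the binomial series for the $(1,k)$ entry of $(I-cA)^{p-3}$ to have sign $(-1)^k$. After the $D$-similarity this gives $(Q^{p-3})_{1k}<0$.
\item For integer $p=m$, locality gives $(Q^{m-3})_{1m}=0$ exactly. Taking $c$ past the first threshold leaves one negative eigenvalue, and removing it yields $(Q_+^{m-3})_{1m}<0$.
\end{itemize}
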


\begin{corollary}\label{cor:allp}
For every real $p\ge1$, positive $p$-energy is not monotone under edge addition.
\end{corollary}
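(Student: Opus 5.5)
The plan is to follow the construction sketched in the abstract: a chain of $k$ clique blocks $K_m$ (blocks $B_1,\dots,B_k$), with consecutive blocks $B_j,B_{j+1}$ joined by a $d$-regular bipartite graph, where $d\approx cm$ for a fixed ratio $c>0$. The new edge $uv$ will be added inside the end block $B_1$ (after deleting one edge from the clique to create a nonedge), or between two vertices whose quotient effect sits at the first coordinate. The partition into blocks is equitable, so the quotient matrix is $(m-1)I+dA(P_k)$. After dividing by $m$, it converges to $Q=I+cA(P_k)$. The eigenvalues of $G$ are those of the quotient together with eigenvalues of the orthogonal complement. These complementary eigenvalues are bounded by $O(d/m\cdot m^{1/2})$-type terms, or are $-1$ from cliques together with bounded bipartite perturbations. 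Their positive parts contribute only $o(m^p)$ after scaling, while the quotient eigenvalues contribute order $m^p$. Hence $\mathcal E_p^+(G)=m^p\,\mathrm{tr}\,(Q^+)^p+o(m^p)$, where $Q^+$ denotes the positive part.

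Next I will compute the first-order effect of the edge perturbation. Adding an edge at the end block changes the quotient by a perturbation concentrated at the $(1,1)$ entry, to leading order. Concretely, I will instead add a whole structured family of edges, or use the Hellmann--Feynman derivative. The derivative of $\mathrm{tr}\,f(Q)$ with $f(x)=(x^+)^p$ in the direction $E_{11}$ is $f'(Q)_{11}=p\,((Q^+)^{p-1})_{11}$. Since one edge yields a change of size about $1/m$ in scaled units, I need a careful error budget. It suffices to show that the endpoint entry $((Q^+)^{p-1})_{11}$ is \emph{negative}, and then to balance the perturbation scale against the $o(\cdot)$ errors. To keep the errors under control, I will most likely make the perturbation sign argument exact through the equitable structure. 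I will add a perfect matching's worth of edges symmetrically to keep equitability, and then interpolate down to a single edge by a telescoping argument: if the total decreases, some single edge addition along the way decreases it.

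The sign computation is the core. If $c<1/(2\cos(\pi/(k+1)))$, then $Q$ is positive definite and $Q^{p-1}=(I+cA)^{p-1}$. Then $Q^{p-1}=(I-cA')^{p-1}$ up to the bipartite sign similarity $D A D=-A$, and the diagonal is unaffected. Expanding by the binomial series, $\sum_j\binom{p-1}{j}c^j(A^j)_{11}$, only even $j$ contribute. Pick $k$ large, so that $(A^j)_{11}$ equals the Catalan-type count for $j<2k$, and $c$ close to the threshold. I will then show that for noninteger $p-1$ the signs of $\binom{p-1}{2i}$ eventually become fixed and negative for $p>2$, and that the tail dominates as $c\to1/2$. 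In that regime the near-singular eigenvalue $1-2c\cos(\pi/(k+1))\to0$ makes the low-order terms negligible relative to the divergence of the series. For integer $p\ge3$ this fails, since $Q^{p-1}$ is a positive polynomial. There I will take $c$ just past the first threshold, so that exactly one eigenvalue $\mu<0$ of $Q$ appears. Then $((Q^+)^{p-1})_{11}=(Q^{p-1})_{11}-\mu^{p-1}\phi_{1}^2$, where $\phi$ is the corresponding eigenvector, with endpoint mass $\phi_1^2\approx 2/(k+1)\sin^2(\pi/(k+1))$. Path locality gives $(A^j)_{11}=0$ beyond short range, which lets me compare the two terms and obtain a negative result for suitable $k$ and $p$.

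The main obstacle I expect is this integer case, together with making the asymptotic error terms rigorous uniformly. Infinitely many examples then come from letting $m\to\infty$.
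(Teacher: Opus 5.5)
There is a genuine gap, and it sits at the step you call the core. The quantity you need to be negative, $((Q^+)^{p-1})_{11}=\sum_{\theta_j>0}\theta_j^{p-1}(w_j)_1^2$, is a diagonal entry of a positive semidefinite matrix, so it is never negative. Your equitable variant shows the same obstruction directly. Putting a perfect matching back into $B_1$ changes the quotient by exactly $+E_{11}$, a positive semidefinite perturbation. By Weyl every quotient eigenvalue then weakly increases, so the quotient part of $\mathcal E_p^+$ cannot drop. For a single edge $xy$ inside $B_1$, Hellmann--Feynman gives $\frac{d}{dt}\lambda_j=2\phi_j(x)\phi_j(y)\approx2(w_j)_1^2/m\ge0$ for every large mode.

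The supporting claims fail for the same reason:
\begin{itemize}
\item For positive definite $Q$, your series $\sum_i\binom{p-1}{2i}c^{2i}(A^{2i})_{11}$ is just $(Q^{p-1})_{11}>0$ for every admissible $c$. There is no divergence to exploit as $1-c\rho(A)\to0$, since $p-1>0$.
\item Path locality never kills diagonal entries, because $(A^{2i})_{11}\ge1$ for all $i$.
\item In the integer case, subtracting the negative mode from $(Q^{p-1})_{11}$ only leaves the nonnegative sum over the positive modes.
\end{itemize}

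The missing idea is that the added edge must couple two different, far-apart quotient cells. Then the decisive entry is off-diagonal and its low-order terms vanish by locality. The paper attaches singleton vertices $u,v$ to $C_1$ and $C_k$ and adds $uv$. The endpoint coordinates satisfy $n\,x_tz_t\to(w_j)_1(w_j)_k/\theta_j^2$, so the energy difference divided by $n^{p-2}$ tends to $2p\,(Q_+^{p-3})_{1k}$ (Lemma~\ref{lem:limit}). For noninteger $p$ and $k=\lfloor p\rfloor$, all nonzero terms of the binomial series for the $(1,k)$ entry of $(I-cA)^{p-3}$ share one sign (those with $j<k-1$ vanish), which yields $(Q^{p-3})_{1k}<0$. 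For integer $p=m$ and $k=m$, locality gives $(Q^{m-3})_{1m}=0$. Crossing the first threshold then leaves one negative mode whose removal makes $(Q_+^{m-3})_{1m}<0$. Your threshold-crossing idea is the right one, but it only works on this off-diagonal entry.

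The same construction removes your error-budget problem. Since $u,v$ couple only to block-constant vectors, the restriction to the complement is identical before and after adding $uv$ and cancels exactly. You would instead be estimating it by an $o(m^p)$ term, which would swamp a signal of order at most $m^{p-1}$. Your bound on the complementary eigenvalues is also unjustified for an arbitrary $d$-regular bipartite join, whose nontrivial eigenvalues can be as large as $d$.

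Finally, your plan does not cover $1\le p\le2$ at all. The paper's proof of the corollary handles $1\le p<3$ by citing Tang--Liu--Wang and uses Theorem~\ref{thm:main} only for $p\ge3$.
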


\begin{proof}
For $1\le p<3$ this is the result of Tang--Liu--Wang \cite{TangLiuWang2026}. For $p\ge3$ it follows from Theorem \ref{thm:main}; the overlap $2<p<3$ is harmless.
\end{proof}

The key point is that the phenomenon has no upper threshold. In particular, $p=4$ is not a monotonicity boundary, and neither is any larger exponent. This is compatible with recent global path-minimality results for positive $p$-energy \cite{LiuTang2026}: a graph parameter can have a global minimum at a path while still decreasing under a particular local edge addition.

An earlier three-block family also gives counterexamples throughout $2<p<4$. To make the quantifiers explicit, that auxiliary argument has the form
\[
\forall p\in(2,4)\;\exists K=K(p)\;\exists N=N(p,K)\;\forall n\ge N:\quad\Delta_{n,K}(p)<0.
\]
Thus the block-ratio parameter $K$ is chosen \emph{after} fixing $p$; no single $K$ works uniformly all the way up to $4$. In fact the nonuniformity becomes acute as $p\uparrow4$ (rather than as $p\downarrow2$). The chain construction below makes that auxiliary family unnecessary for the main theorem.

\section{The chain construction and the quotient limit}
Fix an integer $k\ge2$. Let $C_1,\dots,C_k$ be pairwise disjoint cliques of order $n$. Between $C_i$ and $C_{i+1}$ place any $d$-regular bipartite graph, for $1\le i<k$, and place no edges between nonconsecutive blocks. Add two vertices $u,v$, join $u$ to every vertex of $C_1$, and join $v$ to every vertex of $C_k$. Let $H_{n,d,k}^{(0)}$ be the resulting graph, and set
\[
H_{n,d,k}^{(1)}=H_{n,d,k}^{(0)}+uv.
\]
For every integer $0\le d\le n$, a $d$-regular bipartite graph with two parts of size $n$ exists, for example as a union of $d$ edge-disjoint perfect matchings of $K_{n,n}$. For $d\ge1$ the graph $H_{n,d,k}^{(0)}$ is connected.

The partition
\[
\{u\},C_1,\dots,C_k,\{v\}
\]
is equitable (see, e.g., \cite{CvetkovicEtAl2010} for equitable partitions and quotient spectra). In the normalized block-constant basis, and with $t\in[0,1]$ temporarily denoting the weight of $uv$, the quotient matrix is
\begin{equation}
B_t(n,d)=
\begin{pmatrix}
0&\sqrt n e_1^\top&t\\
\sqrt n e_1&(n-1)I_k+dA(P_k)&\sqrt n e_k\\
t&\sqrt n e_k^\top&0
\end{pmatrix}.
\label{eq:Bt}
\end{equation}
The orthogonal complement of the block-constant subspace is invariant: a regular bipartite adjacency matrix sends zero-sum vectors to zero-sum vectors, clique adjacency acts as $-I$ there, and the endpoint vertices couple only to block-constant vectors. The restriction to this orthogonal complement is identical for $t=0$ and $t=1$. Hence it cancels from every energy difference.

For a real symmetric matrix $Q=\sum_j\theta_j w_jw_j^\top$ and a real $s$, define
\[
Q_+^s:=\sum_{\theta_j>0}\theta_j^s w_jw_j^\top.
\]
When $s=0$, this is the orthogonal projector onto the positive eigenspace.

\begin{lemma}[Quotient perturbation limit]\label{lem:limit}
Fix $k\ge2$.  Let $d=d_n$ satisfy $d_n/n\to c\in(0,1]$, and put
\[
Q=I_k+cA(P_k).
\]
Assume that $Q$ has no zero eigenvalue. For every fixed real $p>2$,
\begin{equation}
\lim_{n\to\infty}
\frac{\mathcal E_p^+(H_{n,d_n,k}^{(1)})-\mathcal E_p^+(H_{n,d_n,k}^{(0)})}{n^{p-2}}
=2p\,(Q_+^{p-3})_{1k}.
\label{eq:limit}
\end{equation}
\end{lemma}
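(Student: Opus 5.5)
The plan is to reduce to the $(k+2)\times(k+2)$ quotient matrix, interpolate in the edge weight $t$, and compute the $t$-derivative of the positive $p$-energy with first-order eigenvalue perturbation. Since the restriction to the orthogonal complement of the block-constant space does not depend on $t$, we have
\[
\mathcal E_p^+(H^{(1)}_{n,d_n,k})-\mathcal E_p^+(H^{(0)}_{n,d_n,k})=F(1)-F(0),\qquad F(t):=\operatorname{tr}\phi(B_t(n,d_n)),\quad \phi(x)=x_+^p .
\]
For $p>1$ the function $\phi$ is $C^1$ on $\mathbb R$. Hence $F$ is $C^1$ in $t$ even if some eigenvalue crosses $0$, and the standard trace formula gives
\[
F'(t)=\operatorname{tr}\bigl(\phi'(B_t)E\bigr)=2p\,\bigl((B_t)_+^{p-1}\bigr)_{uv},\qquad E=e_ue_v^\top+e_ve_u^\top .
\]
It therefore suffices to show that $n^{-(p-2)}\bigl((B_t)_+^{p-1}\bigr)_{uv}\to (Q_+^{p-3})_{1k}$ uniformly for $t\in[0,1]$, and then integrate over $t$.

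\textbf{Step 1: spectrum of $B_t/n$.} The rescaled matrix $n^{-1}B_t$ has middle block $Q_n:=\frac{n-1}{n}I+\frac{d_n}{n}A(P_k)\to Q$. Its coupling entries are $n^{-1/2}$, and its $uv$ entry is $t/n$. Because $Q$ has no zero eigenvalue, we may take $\delta>0$ smaller than every $|\theta_j(Q)|$.
\begin{itemize}
\item By Weyl's inequality, $B_t$ has exactly $k$ eigenvalues of the form $n\theta_j+O(1)$, counted with multiplicity when the $\theta_j$ coincide.
\item The remaining two eigenvalues are $O(1)$. Indeed, for $|\mu|\le C$ the Schur complement onto the endpoint coordinates is
\[
\begin{pmatrix}0&t\\ t&0\end{pmatrix}-n\,[e_1,e_k]^\top(nQ_n-\mu)^{-1}[e_1,e_k]=O(1).
\]
\end{itemize}
The two small eigenvalues contribute $O(1)\cdot O(1)^{p-1}=O(1)$ to the $uv$ entry of $(B_t)_+^{p-1}$, because eigenvector entries are bounded by $1$. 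This is $o(n^{p-2})$ since $p>2$.

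\textbf{Step 2: endpoint components of the large eigenvectors.} For a large eigenvalue $\Lambda=n\theta+O(1)$ with unit eigenvector $(x_u,y,x_v)$, the eigen-equation gives
\[
x_u=\frac{\sqrt n\,y_1+t x_v}{\Lambda},\qquad x_v=\frac{\sqrt n\,y_k+tx_u}{\Lambda}.
\]
Hence $x_u=y_1/(\sqrt n\,\theta)+O(n^{-1})$, and similarly for $x_v$. In particular $\|y\|=1-O(1/n)$. The vectors $y$ are eigenvectors of $Q_n+O(n^{-1})$.

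I will not match individual eigenvectors, which could be unstable under repeated $\theta_j$. Instead I will work with spectral projectors. Let $\Gamma$ be a contour around the positive part of $\operatorname{spec}(Q)$ that stays at distance at least $\delta$ from $\operatorname{spec}(Q)$. The Riesz projector over $\Gamma$ converges, so the large-eigenvalue part of the endpoint block is
\[
\sum_{\Lambda\text{ large},\,\Lambda>0}\Lambda^{p-1}x_ux_v
= n^{p-1}\cdot\frac1n\sum_{\theta_j>0}\theta_j^{p-1}\theta_j^{-2}(w_j)_1(w_j)_k+o(n^{p-2})
= n^{p-2}(Q_+^{p-3})_{1k}+o(n^{p-2}).
\]
The error terms are uniform in $t\in[0,1]$ because every bound above depends on $t$ only through $|t|\le1$.

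\textbf{Step 3: conclusion.} Integrating $F'(t)$ over $[0,1]$ and dividing by $n^{p-2}$ gives the limit $2p\,(Q_+^{p-3})_{1k}$, which is \eqref{eq:limit}.

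\textbf{Main obstacle.} The delicate part is Step 2: making the $o(n^{p-2})$ error rigorous and uniform, especially when $Q$ has repeated eigenvalues. Because $A(P_k)$ has simple spectrum, $Q$ does too, which should simplify this. Still, I would write the whole computation via the resolvent identity
\[
\bigl((B_t)_+^{p-1}\bigr)_{uv}=\frac{1}{2\pi i}\oint z^{p-1}\bigl((z-B_t)^{-1}\bigr)_{uv}\,dz
\]
on the contour $n\Gamma$, and expand the $uv$ entry of the resolvent by block inversion. This gives the expansion cleanly, with the $O(1)$ eigenvalues handled separately as in Step 1.
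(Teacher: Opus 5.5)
Your proposal is correct, but it is organized differently from the paper's proof.

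\textbf{Comparison of routes.} The paper applies Hellmann--Feynman separately to each of the $k$ large eigenvalues; these are simple because $A(P_k)$ has simple spectrum. It derives $n x_tz_t\to(w_j)_1(w_j)_k/\theta_j^2$ uniformly in $t$ directly from the endpoint equations, then integrates. The two stray eigenvalues are never differentiated: the paper only bounds their energy at $t=0$ and $t=1$.

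You instead differentiate the whole trace functional $F(t)=\operatorname{tr}\phi(B_t)$ with $\phi(x)=x_+^p\in C^1$, which gives $F'(t)=2p\bigl((B_t)_+^{p-1}\bigr)_{uv}$. This has several advantages:
\begin{itemize}
\item it is insensitive to eigenvalue collisions and zero crossings;
\item the strays enter only as an $O(1)$ term in $F'$;
\item the contour evaluation needs no simplicity of $\operatorname{spec}(Q)$ and yields an explicit $O(n^{p-3})$ rate.
\end{itemize}
The cost is heavier machinery: the $C^1$ trace-derivative formula, a branch of $z^{p-1}$, and Riesz integrals. The paper gets by with only Weyl's inequality and Hellmann--Feynman.

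\textbf{Step 2 as written is not justified.} Convergence of the Riesz projector is not enough. Its limit is $0\oplus Q_+^0\oplus0$, whose $uv$ entry vanishes, and what you need is the $1/n$ correction to that entry. The block inversion in your last paragraph does supply it. For $z=n\zeta$ with $\zeta\in\Gamma$, the endpoint block of $(z-B_t)^{-1}$ is $\bigl(n\zeta-T-R(\zeta)\bigr)^{-1}$, where $T=\bigl(\begin{smallmatrix}0&t\\ t&0\end{smallmatrix}\bigr)$ and $R(\zeta)=[e_1,e_k]^\top(\zeta-Q_n)^{-1}[e_1,e_k]=O(1)$. Its off-diagonal entry is therefore $\bigl(t+((\zeta-Q_n)^{-1})_{1k}\bigr)/(n\zeta)^2+O(n^{-3})$. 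Integrating against $z^{p-1}$ kills the $t$ term and leaves $n^{p-2}\bigl((Q_n)_+^{p-3}\bigr)_{1k}+O(n^{p-3})$, uniformly in $t$. For this to work, $\Gamma$ should cross the real axis at a fixed positive point, so that $n\Gamma$ avoids both the $O(1)$ eigenvalues and the branch cut. You should write this computation out, since it is the heart of the proof.

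\textbf{Smaller imprecisions.}
\begin{itemize}
\item Weyl gives only $n\theta_{j,n}+O(\sqrt n)=n\theta_j+o(n)$, not $n\theta_j+O(1)$, because the rate of $d_n/n\to c$ is unspecified. Consequently your $O(n^{-1})$ errors in Step~2 are only $o(n^{-1/2})$ relative to $\theta_j$. This is harmless, since only limits are used.
\item The Schur-complement bound for the strays is circular as phrased (``for $|\mu|\le C$''). Apply it instead to any eigenvalue with $|\mu|\le n\delta/2$; by Weyl this range contains both strays. This gives $|\mu|\le\|S(\mu)\|\le1+2/\delta$.
\end{itemize}
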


\begin{proof}
Write $A=A(P_k)$ and
\[
Q_n=\left(1-\frac1n\right)I_k+\frac{d_n}{n}A,
\qquad
\widetilde B_t=\frac1n B_t(n,d_n),
\qquad
S_n=0\oplus Q_n\oplus0.
\]
All estimates below are uniform for $t\in[0,1]$; the implicit constants may depend on $k$ and $Q$, but not on $t$ or $n$.  From \eqref{eq:Bt}, using the Frobenius norm to bound the operator norm,
\begin{equation}
\sup_{0\le t\le1}\|\widetilde B_t-S_n\|
\le \sqrt{\frac4n+\frac2{n^2}}
\le \frac3{\sqrt n}.
\label{eq:uniform-matrix-pert}
\end{equation}
Moreover $Q_n\to Q$ in operator norm.  We shall use only the elementary Weyl eigenvalue bound and the Hellmann--Feynman identity for real symmetric matrices; see, e.g., Horn--Johnson \cite{HornJohnson2013}.

Let $\theta_1,\dots,\theta_k$ be the eigenvalues of $Q$.  They are simple because the eigenvalues of $A(P_k)$ are simple, and they are all nonzero by hypothesis.  Set
\[
\gamma=\frac14\min\left\{\min_j|\theta_j|,
\min_{i\ne j}|\theta_i-\theta_j|\right\}>0.
\]
For all sufficiently large $n$, the eigenvalues $\theta_{j,n}$ of $Q_n$ satisfy
$|\theta_{j,n}-\theta_j|<\gamma/2$, and the perturbation in
\eqref{eq:uniform-matrix-pert} is $<\gamma/2$.  Weyl's inequality therefore gives, for every $t\in[0,1]$, exactly one eigenvalue
$\mu_{j,t}(n)$ of $\widetilde B_t$ in a fixed $\gamma$-neighborhood of each $\theta_j$, while the remaining two eigenvalues lie in a neighborhood of $0$ whose radius tends to $0$.  In particular,
\begin{equation}
\sup_{t\in[0,1]}|\mu_{j,t}(n)-\theta_j|\longrightarrow0.
\label{eq:uniform-large-eigs}
\end{equation}
The separated eigenvalues $\mu_{j,t}(n)$ are simple for all sufficiently large $n$.

Fix $j$, and let $(x_t,y_t,z_t)$ be a unit eigenvector of $\widetilde B_t$ for $\mu_{j,t}(n)$, where $y_t\in\mathbb R^k$.  The endpoint and core equations are
\begin{align}
\mu_{j,t}x_t&=n^{-1/2}(y_t)_1+(t/n)z_t,\label{eq:scaled-end1}\\
(Q_n-\mu_{j,t}I)y_t&=-n^{-1/2}(x_te_1+z_te_k),\label{eq:scaled-core}\\
\mu_{j,t}z_t&=n^{-1/2}(y_t)_k+(t/n)x_t.\label{eq:scaled-end2}
\end{align}
By the definition of $\gamma$ and \eqref{eq:uniform-large-eigs},
$|\mu_{j,t}|\ge2\gamma$ for large $n$.  Equations
\eqref{eq:scaled-end1} and \eqref{eq:scaled-end2} thus give
\begin{equation}
|x_t|+|z_t|=O(n^{-1/2})
\label{eq:endpoint-small}
\end{equation}
uniformly in $t$.

The matrices $Q_n$ and $Q$ are affine functions of $A(P_k)$ and hence have the same orthonormal eigenvectors; write $w_j$ for a unit eigenvector belonging to $\theta_j$.  By \eqref{eq:scaled-core} and \eqref{eq:endpoint-small}, the residual of $y_t$ in the core equation is $O(n^{-1})$.  Expanding $y_t$ in the common eigenbasis of $Q_n$ and $Q$, the spectral separation by $\gamma$ implies that every component orthogonal to $w_j$ is $O(n^{-1})$, uniformly in $t$.  Since \eqref{eq:endpoint-small} and unit normalization give $\|y_t\|=1+O(n^{-1})$, we obtain
\begin{equation}
y_t=\sigma_{j,t,n}w_j+O(n^{-1}),
\qquad |\sigma_{j,t,n}|=1+O(n^{-1}),
\label{eq:uniform-y}
\end{equation}
uniformly in $t$.  No choice of the sign of the eigenvector is needed below, because only the product of the two endpoint coordinates occurs.

Multiplying \eqref{eq:scaled-end1}--\eqref{eq:scaled-end2} by $\sqrt n$ and using \eqref{eq:endpoint-small} gives
\[
\sqrt n\,x_t=\frac{(y_t)_1}{\mu_{j,t}}+O(n^{-1}),
\qquad
\sqrt n\,z_t=\frac{(y_t)_k}{\mu_{j,t}}+O(n^{-1}).
\]
Together with \eqref{eq:uniform-large-eigs} and \eqref{eq:uniform-y}, this proves the uniform limit
\begin{equation}
\sup_{t\in[0,1]}
\left|n x_tz_t-\frac{(w_j)_1(w_j)_k}{\theta_j^2}\right|
\longrightarrow0.
\label{eq:uniform-xz}
\end{equation}

Return to the unscaled eigenvalue
$\lambda_{j,t}(n)=n\mu_{j,t}(n)$ of $B_t(n,d_n)$.  Since it is simple, the Hellmann--Feynman formula applied to the $uv$ entry gives
\[
\frac{d}{dt}\lambda_{j,t}(n)=2x_tz_t.
\]
If $\theta_j>0$, then $\lambda_{j,t}(n)>0$ for all $t$ and all sufficiently large $n$, and \eqref{eq:uniform-large-eigs}--\eqref{eq:uniform-xz} yield, uniformly in $t$,
\begin{align}
\frac1{n^{p-2}}\frac{d}{dt}\lambda_{j,t}(n)^p
&=2p\left(\frac{\lambda_{j,t}(n)}n\right)^{p-1}
   \bigl(nx_tz_t\bigr)\notag\\
&\longrightarrow
2p\,\theta_j^{p-3}(w_j)_1(w_j)_k.
\label{eq:uniform-derivative}
\end{align}
If $\theta_j<0$, the corresponding large eigenvalue remains negative and contributes nothing to positive $p$-energy.  Because the convergence in \eqref{eq:uniform-derivative} is uniform on the compact interval $[0,1]$, integration in $t$ and summation over the positive eigenvalues of $Q$ give
\[
2p\sum_{\theta_j>0}\theta_j^{p-3}(w_j)_1(w_j)_k
=2p\,(Q_+^{p-3})_{1k}
\]
from the $k$ eigenvalues of order $n$.

It remains to justify that the two eigenvalues near the two zero eigenvalues of $S_n$ are negligible.  Let $\lambda=n\mu$ be either of these eigenvalues and let $(x,y,z)$ be a corresponding unit eigenvector.  Weyl's bound and \eqref{eq:uniform-matrix-pert} imply
\begin{equation}
\sup_{t\in[0,1]}|\mu|\longrightarrow0.
\label{eq:small-scaled}
\end{equation}
Put $\delta=\min_j|\theta_j|>0$.  For large $n$, uniformly in $t$ and for either stray eigenvalue,
\[
\|(Q_n-\mu I)^{-1}\|\le \frac2\delta.
\]
The unscaled core equation therefore gives the explicit estimate
\begin{equation}
\|y\|
\le \frac{2}{\delta\sqrt n}\sqrt{x^2+z^2}.
\label{eq:small-core-bound}
\end{equation}
Writing $q=\sqrt{x^2+z^2}$, normalization and
\eqref{eq:small-core-bound} give $q\ge1/2$ for all sufficiently large $n$.  The two endpoint equations, viewed as a two-dimensional vector equation, now imply
\[
|\lambda|q
\le \sqrt n\sqrt{y_1^2+y_k^2}+tq
\le \left(\frac2\delta+1\right)q.
\]
Consequently
\begin{equation}
|\lambda|\le \frac2\delta+1
\label{eq:stray-O1}
\end{equation}
for both stray eigenvalues, uniformly for $t\in[0,1]$ and all sufficiently large $n$.  Their total positive $p$-energy at $t=0$ and at $t=1$ is therefore $O(1)$, which is $o(n^{p-2})$ because $p>2$.  This completes the proof of \eqref{eq:limit}.
\end{proof}

\section{Noninteger exponents}
We first prove Theorem \ref{thm:main} when $p>2$ is not an integer. Put
\[
k=\lfloor p\rfloor\ge2,
\qquad
r=p-3.
\]
Then
\begin{equation}
k-3<r<k-2.
\label{eq:rwindow}
\end{equation}
Let $A=A(P_k)$ and choose a rational number
\begin{equation}
0<c<\frac1{\rho(A)}
=\frac1{2\cos(\pi/(k+1))}.
\label{eq:csmall}
\end{equation}
Then both
\[
Q=I+cA,
\qquad
M=I-cA
\]
are positive definite. Let
\[
D=\operatorname{diag}(1,-1,1,-1,\dots).
\]
Since $DAD=-A$, one has $Q=DMD$ and therefore, by functional calculus,
\begin{equation}
Q^r=DM^rD.
\label{eq:similarpower}
\end{equation}

Because $c\rho(A)<1$, the generalized binomial series converges absolutely in operator norm:
\begin{equation}
M^r=(I-cA)^r
=\sum_{j=0}^{\infty}\binom rj(-cA)^j.
\label{eq:binomial}
\end{equation}
The distance between the endpoints of $P_k$ is $k-1$, so
\[
(A^j)_{1k}=0\qquad(j<k-1),
\]
while $(A^{k-1})_{1k}=1$ and $(A^j)_{1k}\ge0$ for all $j$.

For every integer $j\ge k-1$, condition \eqref{eq:rwindow} implies that exactly $j-k+2$ factors in
\[
\binom rj=\frac{r(r-1)\cdots(r-j+1)}{j!}
\]
are negative. Hence
\[
\operatorname{sgn}\!\left(\binom rj(-1)^j\right)
=(-1)^{j-k+2+j}=(-1)^k.
\]
Thus every nonzero summand in the $(1,k)$ entry of \eqref{eq:binomial} has the same sign, and the first one is nonzero. Therefore
\begin{equation}
\operatorname{sgn}(M^r_{1k})=(-1)^k.
\label{eq:Msign}
\end{equation}
Since $D_{11}D_{kk}=(-1)^{k-1}$, equations \eqref{eq:similarpower} and \eqref{eq:Msign} give the strict sign
\begin{equation}
(Q^r)_{1k}<0.
\label{eq:Qnegative-nonint}
\end{equation}
As $Q$ is positive definite, $Q_+^r=Q^r$.

Write $c=a/b$ in lowest terms and take $n$ through multiples of $b$, with $d=cn$. Lemma \ref{lem:limit} and \eqref{eq:Qnegative-nonint} give
\[
\lim_{n\to\infty}
\frac{\mathcal E_p^+(H_{n,d,k}^{(1)})-\mathcal E_p^+(H_{n,d,k}^{(0)})}{n^{p-2}}
=2p(Q^r)_{1k}<0.
\]
Hence every sufficiently large such $n$ is a counterexample, and there are infinitely many.

\section{Integer exponents}
Now let $p=m\ge3$ be an integer, and put
\[
\vartheta=\frac{\pi}{m+1}.
\]
Choose a rational $c\in(0,1)$ such that
\begin{equation}
\frac1{2\cos\vartheta}<c,
\qquad
1-2c\cos(2\vartheta)>0.
\label{eq:cbig}
\end{equation}
Such a choice always exists. The first lower bound is below $1$; for $m=3$ the second inequality is automatic, while for $m\ge4$ the strict inequality $\cos\vartheta>\cos(2\vartheta)$ leaves a nonempty interval before the second spectral threshold.

The eigenvalues of
\[
Q=I_m+cA(P_m)
\]
are
\[
\theta_j=1+2c\cos\frac{j\pi}{m+1},
\qquad1\le j\le m.
\]
By \eqref{eq:cbig}, $\theta_m<0<\theta_{m-1}$. Thus $Q$ has exactly one negative eigenvalue and no zero eigenvalue.

Set $r=m-3$. Since the endpoints of $P_m$ have distance $m-1$ and $r<m-1$,
\begin{equation}
(Q^r)_{1m}=0.
\label{eq:locality}
\end{equation}
Let $w_m$ be the normalized sine eigenvector corresponding to $\theta_m$. Then
\[
w_m(1)w_m(m)
=\frac{2}{m+1}(-1)^{m+1}\sin^2\vartheta.
\]
Since $\theta_m<0$ and $r=m-3$,
\[
\theta_m^r w_m(1)w_m(m)>0.
\]
Using the spectral decomposition in \eqref{eq:locality}, and recalling that $\theta_m$ is the only negative eigenvalue, gives
\[
0=(Q_+^r)_{1m}+\theta_m^r w_m(1)w_m(m).
\]
Therefore
\begin{equation}
(Q_+^{m-3})_{1m}<0.
\label{eq:Qnegative-int}
\end{equation}
As in the noninteger case, take $n$ through multiples of the denominator of $c$ and put $d=cn$. Lemma \ref{lem:limit} yields
\[
\lim_{n\to\infty}
\frac{\mathcal E_m^+(H_{n,d,m}^{(1)})-\mathcal E_m^+(H_{n,d,m}^{(0)})}{n^{m-2}}
=2m(Q_+^{m-3})_{1m}<0.
\]
This proves Theorem \ref{thm:main} at every integer $m\ge3$, and together with the previous section completes its proof for every real $p>2$.

\section{An explicit $p=4$ certificate}
Although the asymptotic theorem already settles $p=4$, a concrete exact example is useful. Take four cliques $C_1,C_2,C_3,C_4\cong K_9$, join consecutive cliques completely, add vertices $u,v$, join $u$ to all of $C_1$ and $v$ to all of $C_4$, and omit $uv$. Call the resulting $38$-vertex graph $J_0$, and put $J_1=J_0+uv$.

An equitable-partition calculation gives
\begin{align}
\chi_{J_0}(x)
&=(x+1)^{32}
(x^3-25x^2+46x+153)
(x^3-7x^2-98x-9),\label{eq:J0-char}\\
\chi_{J_1}(x)
&=(x+1)^{34}(x+7)(x-14)(x^2-27x+98).
\label{eq:J1-char}
\end{align}
Let $-a$ be the negative root of the first cubic in \eqref{eq:J0-char}, and let $-b,-c$ be the two negative roots of the second, with $b>c>0$. Their relevant negative spectra are
\[
J_0:\quad -1\ (32\text{ times}),\ -a,-b,-c,
\qquad
J_1:\quad -1\ (34\text{ times}),\ -7.
\]
Direct rational evaluations give
\begin{equation}
a<\frac{421}{250},
\qquad
b<\frac{6939}{1000},
\qquad
c<\frac{93}{1000}.
\label{eq:abc-bounds}
\end{equation}
Indeed $a$ is the unique positive root of
$a^3+25a^2+46a-153$, while $b,c$ are the two positive roots of
$z^3+7z^2-98z+9$; the derivatives have fixed signs on the stated isolating intervals.

Adding $uv$ creates no new $4$-cycle and both endpoint degrees are $9$. From
\[
\operatorname{tr}A^4
=2|E|+4\sum_w\binom{d(w)}2+8C_4
\]
we obtain
\[
\operatorname{tr}A(J_1)^4-\operatorname{tr}A(J_0)^4=2+4(9+9)=74.
\]
Since total fourth energy is the fourth spectral moment,
\[
\mathcal E_4^+(J_1)-\mathcal E_4^+(J_0)
=a^4+b^4+c^4-2329.
\]
Using \eqref{eq:abc-bounds},
\begin{equation}
\mathcal E_4^+(J_1)-\mathcal E_4^+(J_0)
<-
\frac{1281106782111}{500000000000}<0.
\label{eq:p4-cert}
\end{equation}
Numerically the difference is approximately $-3.8556020556$.

\section{A $17$-vertex exact counterexample and its full failure interval}
Take $n=m=5$. Write $G_0=G_{5,5}^{(0)}$ and $G_1=G_{5,5}^{(1)}$. In this case
\begin{align*}
\det(xI-P_0)&=x^3-8x^2-39x+20=:f_0(x),\\
\det(xI-P_1)&=x^3-9x^2-31x+54=:f_1(x),\\
\det(xI-M_0)&=(x-5)(x+1),\\
\det(xI-M_1)&=x^2-3x-9.
\end{align*}
Hence
\begin{align}
\chi_{G_0}(x)&=(x+1)^{13}(x-5)f_0(x),\label{eq:char0}\\
\chi_{G_1}(x)&=(x+1)^{12}(x^2-3x-9)f_1(x).\label{eq:char1}
\end{align}
Because $P_0$ and $P_1$ are real symmetric, all roots of the two cubics are real. Their determinants are negative (the cubic constant terms are positive) while their traces are $8$ and $9$, respectively; hence each cubic has exactly one negative root. Write those roots as $-a$ and $-b$, respectively. Then
\begin{align}
a^3+8a^2-39a-20&=0,\label{eq:a}\\
b^3+9b^2-31b-54&=0.\label{eq:b}
\end{align}
The negative eigenvalue of $M_1$ has absolute value
\[
c=\frac{3(\sqrt5-1)}2.
\]

Because $N_{G_0}(u)\cap N_{G_0}(v)=\varnothing$, adding $uv$ creates no triangle. Therefore
\[
\operatorname{tr}A(G_1)^3=\operatorname{tr}A(G_0)^3,
\]
and
\begin{equation}
\Delta(3):=
\mathcal E_3^+(G_1)-\mathcal E_3^+(G_0)
=-55+27\sqrt5+b^3-a^3.
\label{eq:delta3}
\end{equation}
On $[3,\infty)$ the polynomials in \eqref{eq:a}--\eqref{eq:b} are strictly increasing, and direct evaluation at $3$ and $4$ places both $a$ and $b$ in $(3,4)$. Exact evaluations then give
\[
a>\frac{753}{200},
\qquad
b<\frac{454}{125},
\qquad
\sqrt5<\frac{2237}{1000}.
\]
Consequently
\begin{equation}
\Delta(3)
<-
\frac{59470157}{10^9}<0.
\label{eq:delta3cert}
\end{equation}
This alone gives a short exact $17$-vertex counterexample at the endpoint $p=3$ of the previously open range.

We now characterize all $p$ for which this same graph pair is a counterexample.

\begin{lemma}[Generalized Descartes rule]\label{lem:descartes}
Let
\[
F(s)=\sum_{i=1}^r c_i a_i^s,
\qquad
0<a_1<a_2<\cdots<a_r,
\qquad c_i\ne0.
\]
Then the number of distinct real zeros of $F$ is at most the number of sign changes in the sequence
$(c_1,\dots,c_r)$.
\end{lemma}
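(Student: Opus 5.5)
I will prove Lemma~\ref{lem:descartes} by induction on the number $V$ of sign changes in $(c_1,\dots,c_r)$, using Rolle's theorem after a suitable normalization. This is the classical Laguerre argument.

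\textbf{Base case} $V=0$. All $c_i$ have the same sign. Since every $a_i^s$ is positive, $F(s)$ then has that fixed sign for every real $s$, so $F$ has no zeros.

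\textbf{Inductive step.} Suppose $V\ge1$ and the claim holds for all such sums with fewer sign changes. Choose an index $j$ with $c_jc_{j+1}<0$, and pick a number $\beta$ with $a_j<\beta<a_{j+1}$. Put
\[
G(s)=\beta^{-s}F(s)=\sum_{i=1}^r c_i\,(a_i/\beta)^s .
\]
Multiplying by the positive factor $\beta^{-s}$ does not change the zero set, so $G$ and $F$ have the same real zeros. Differentiate:
\[
G'(s)=\sum_{i=1}^r c_i\log(a_i/\beta)\,(a_i/\beta)^s .
\]
The factor $\log(a_i/\beta)$ is negative for $i\le j$ and positive for $i\ge j+1$. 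So the coefficient sequence $c_i\log(a_i/\beta)$ has the signs of $c_i$ flipped on the first block and kept on the second. Sign changes inside each block are preserved, and the change between positions $j$ and $j+1$ is destroyed. Hence the coefficients of $G'$ have exactly $V-1$ sign changes. All of them are nonzero, and the bases $a_i/\beta$ are positive, distinct and increasing, so $G'$ satisfies the hypotheses of the lemma. By the induction hypothesis, $G'$ has at most $V-1$ distinct real zeros.

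\textbf{Conclusion via Rolle.} If $F$, and hence $G$, had $N$ distinct real zeros, Rolle's theorem (valid since $G$ is real-analytic) would give at least $N-1$ distinct zeros of $G'$, one strictly between each consecutive pair. Therefore $N-1\le V-1$, that is, $N\le V$.

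\textbf{Edge cases.} The only point needing care is that the induction hypothesis must apply to $G'$, which requires every coefficient $c_i\log(a_i/\beta)$ to be nonzero. This is why $\beta$ is chosen strictly between $a_j$ and $a_{j+1}$ and never equal to any $a_i$. One should also note that $F$ is not identically zero when $r\ge1$. Indeed, if all $c_i$ are nonzero, dividing by $a_r^s$ and letting $s\to\infty$ shows that $F(s)a_r^{-s}\to c_r\neq 0$. So the zero set is a genuine finite set to which Rolle applies. The lemma counts only distinct zeros, so multiplicities can be ignored.
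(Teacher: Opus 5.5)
Your proof is correct. It is the classical Laguerre argument, and it organizes the induction differently from the paper's proof, although both rest on Rolle's theorem.

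The paper substitutes $x=e^s$ and divides by the smallest base $a_1^s$, so the first term becomes a constant. It then inducts on the number of terms $r$: differentiation kills the constant and leaves coefficient signs $(c_2,\dots,c_r)$. This forces a case split.
\begin{itemize}
\item If $c_1c_2<0$, one sign change is lost and plain Rolle gives $Z(f)\le Z(f')+1$.
\item If $c_1c_2>0$, the sign-change count does not drop. The paper then needs an extra boundary argument: near $x=0^+$ both $f$ and $f'$ have the sign of $c_1$, so $f'$ must vanish before the first zero of $f$, which gives $Z(f)\le Z(f')$.
\end{itemize}
You instead divide by $\beta^s$ with $\beta$ strictly between two consecutive bases at a sign change. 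This keeps all $r$ terms and makes exactly one sign change disappear. You therefore need only Rolle and induction on $V$, with no case distinction and no asymptotic analysis at an endpoint. The paper's version reduces the number of terms and stays closer to the textbook degree-reduction proof of Descartes' rule for polynomials, but it pays for this with the same-sign case.

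One minor point: your claim that the zero set is finite because $F\not\equiv0$ does not follow as stated, since a nonzero analytic function can have infinitely many zeros. The claim is also unnecessary. If $G$ had $V+1$ distinct zeros, Rolle would already give $V$ distinct zeros of $G'$, contradicting the induction hypothesis.
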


\begin{proof}
Set $x=e^s$ and $r_i=\log a_i$. Multiplying by the positive factor $x^{-r_1}$ reduces the claim to a generalized polynomial
\[
f(x)=c_1+\sum_{i=2}^r c_i x^{\rho_i},
\qquad
0<\rho_2<\cdots<\rho_r,
\qquad x>0.
\]
We induct on $r$. The derivative has coefficient signs $(c_2,\dots,c_r)$, since all $\rho_i$ are positive. If $c_1$ and $c_2$ have opposite signs, Rolle's theorem gives
\[
Z(f)\le Z(f')+1,
\]
matching the additional sign change at the front. If $c_1$ and $c_2$ have the same sign, then for sufficiently small $x>0$ both $f(x)$ and $f'(x)$ have that sign. If $f$ has a first positive zero, $f'$ must therefore have a zero at or before it; together with the Rolle zeros between consecutive zeros of $f$, this gives
\[
Z(f)\le Z(f').
\]
The induction hypothesis completes the proof.
\end{proof}

Let $x_0<X_0$ be the two positive roots of $f_0$, and $x_1<X_1$ the two positive roots of $f_1$. The positive eigenvalues are therefore
\[
\{x_0,5,X_0\}
\quad\text{for }G_0,
\qquad
\left\{x_1,\frac{3+3\sqrt5}{2},X_1\right\}
\quad\text{for }G_1.
\]
They satisfy the strict ordering
\begin{equation}
0<x_0<x_1<\frac{3+3\sqrt5}{2}<5<X_0<X_1.
\label{eq:ordering}
\end{equation}
Indeed, $f_0(0)>0>f_0(1)$, while $f_1(1)>0>f_1(2)$; also $(3+3\sqrt5)/2<5$. Next,
\[
f_0\left(\frac{56}{5}\right)=-\frac{1924}{125}<0,
\]
so $X_0>56/5$. Since
\[
f_1(x)-f_0(x)=-x^2+8x+34
\]
is decreasing for $x>4$ and is already negative at $56/5$, we have $f_1(X_0)<0$, hence $X_1>X_0$.

Thus
\begin{equation}
\Delta(p)
=-x_0^p+x_1^p
+\left(\frac{3+3\sqrt5}{2}\right)^p
-5^p-X_0^p+X_1^p.
\label{eq:exp-poly}
\end{equation}
The coefficient signs in increasing order of the bases are
\[
-,+,+,-,-,+,
\]
which have exactly three sign changes. Lemma \ref{lem:descartes} therefore implies that $\Delta$ has at most three distinct real zeros. One is the trivial zero
\[
\Delta(0)=0.
\]

We next certify signs at $p=2,3,4$. Since one edge is added,
\[
\operatorname{tr}A(G_1)^2-\operatorname{tr}A(G_0)^2=2.
\]
Using the negative spectra from \eqref{eq:char0}--\eqref{eq:char1},
\begin{equation}
\Delta(2)
=a^2-b^2+\frac{9\sqrt5-21}{2}.
\label{eq:delta2}
\end{equation}
The additional exact bound
\[
\sqrt5>\frac{559}{250}
\]
gives
\begin{equation}
\Delta(2)>
\frac{545801}{10^6}>0.
\label{eq:delta2cert}
\end{equation}
Equation \eqref{eq:delta3cert} gives $\Delta(3)<0$.

Finally, the standard closed-walk identity
\[
\operatorname{tr}A^4
=2|E|+4\sum_w\binom{d(w)}2+8C_4
\]
shows that
\[
\operatorname{tr}A(G_1)^4-\operatorname{tr}A(G_0)^4=42:
\]
the new edge contributes $2$, the two endpoint degrees are both $5$, and no new $4$-cycle is created. Hence
\begin{equation}
\Delta(4)=43+a^4-b^4-c^4.
\label{eq:delta4}
\end{equation}
Using the same rational bounds as above yields
\begin{equation}
\Delta(4)>
\frac{929118595792143}{16000000000000}>0.
\label{eq:delta4cert}
\end{equation}

We can now state the complete characterization.

\begin{theorem}\label{thm:fullinterval}
For the $17$-vertex pair $(G_0,G_1)$ there exist unique numbers
\[
\alpha\in(2,3),
\qquad
\beta\in(3,4)
\]
such that, for every $p\ge1$,
\[
\mathcal E_p^+(G_1)<\mathcal E_p^+(G_0)
\quad\Longleftrightarrow\quad
\alpha<p<\beta.
\]
Numerically,
\[
\alpha\approx2.4322519338217608578,
\qquad
\beta\approx3.0408359954191677907.
\]
\end{theorem}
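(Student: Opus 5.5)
The plan is to treat $\Delta(p)=\mathcal E_p^+(G_1)-\mathcal E_p^+(G_0)$ as the exponential polynomial \eqref{eq:exp-poly}, which is valid for every real $p$ because the ordering \eqref{eq:ordering} fixes the positive spectra of both graphs. I will count its real zeros using Lemma~\ref{lem:descartes} and then locate them with the sign certificates already obtained. The coefficient pattern $-,+,+,-,-,+$ has three sign changes. Hence $\Delta$ has at most three distinct real zeros, and $p=0$ is one of them. So at most two zeros lie in $(0,\infty)$.

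Next I use the certified signs $\Delta(2)>0$ from \eqref{eq:delta2cert}, $\Delta(3)<0$ from \eqref{eq:delta3cert}, and $\Delta(4)>0$ from \eqref{eq:delta4cert}. Since $\Delta$ is continuous, the intermediate value theorem gives a zero $\alpha\in(2,3)$ and a zero $\beta\in(3,4)$. Together with $0$, these are three zeros, so they are all the real zeros of $\Delta$. Both $\alpha$ and $\beta$ are therefore unique in their intervals, and $\Delta$ has no other zero on $[1,\infty)$.

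It then follows that $\Delta$ has constant sign on each of $[1,\alpha)$, $(\alpha,\beta)$ and $(\beta,\infty)$. The middle interval contains $3$, so $\Delta<0$ there. The interval $[1,\alpha)$ contains $2$, so $\Delta>0$ there. The interval $(\beta,\infty)$ contains $4$, so $\Delta>0$ there as well. As a consistency check, the leading base $X_1$ carries coefficient $+$, so $\Delta(p)>0$ for large $p$. Finally $\Delta(\alpha)=\Delta(\beta)=0$, so the endpoints are excluded. This gives $\Delta(p)<0\iff\alpha<p<\beta$ for $p\ge1$.

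The delicate step is already done upstream: it is the ordering \eqref{eq:ordering}, which fixes the coefficient sign sequence, together with the rational certificates. What remains here is to check that all six bases in \eqref{eq:exp-poly} are pairwise distinct, which the strict ordering provides, so that Lemma~\ref{lem:descartes} applies without merging terms. The numerical values of $\alpha$ and $\beta$ would be obtained afterwards by bisection or Newton iteration on \eqref{eq:exp-poly}. They are not part of the logical argument.
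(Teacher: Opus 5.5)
Your proposal is correct and follows the paper's proof essentially step for step. The paper applies the intermediate value theorem to the certified signs $\Delta(2)>0$, $\Delta(3)<0$, $\Delta(4)>0$ to get zeros in $(2,3)$ and $(3,4)$; with the trivial zero $\Delta(0)=0$ these use up the three zeros that Lemma~\ref{lem:descartes} allows for the sign pattern $-,+,+,-,-,+$, and the sign on each remaining interval is read off from the values at $2$, $3$, $4$. Your extra remarks are also accurate: the six bases are distinct by \eqref{eq:ordering}, $\Delta(p)>0$ for large $p$, and the numerical values of $\alpha,\beta$ play no logical role.
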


\begin{proof}
Continuity together with \eqref{eq:delta2cert} and \eqref{eq:delta3cert} produces a zero $\alpha\in(2,3)$; continuity together with \eqref{eq:delta3cert} and \eqref{eq:delta4cert} produces a zero $\beta\in(3,4)$. Along with $p=0$, these are three distinct real zeros. By Lemma \ref{lem:descartes}, there can be no others. The signs at $2,3,4$ then force exactly the stated sign pattern on $p\ge1$.
\end{proof}

\section{Discussion}
Corollary \ref{cor:allp} gives a complete negative resolution of the spectral-radius-style monotonicity program above.  In particular, it disproves Conjecture~3 of Tang--Liu--Wang, arXiv v3 \cite{TangLiuWang2025v3}: there is no exponent $p\ge1$ for which positive $p$-energy is universally monotone under adding a missing edge.  Thus, although the spectral radius itself satisfies $\rho(G+uv)\ge\rho(G)$, none of the positive $p$-energies inherits a universal analogue. The mechanism is different at integer and noninteger exponents. Away from the integers, a fractional-power binomial series has a rigid endpoint sign dictated by path distance. At an integer, that leading term vanishes by locality, and moving the quotient across its first spectral threshold creates one negative mode whose removal from the positive spectral power restores a strict negative sign.

The $38$-vertex example shows that the failure at $p=4$ can be certified with elementary rational arithmetic. The $17$-vertex example gives a different perspective: for one fixed graph pair the failure occurs only on a bounded interval of exponents, even though the family construction produces counterexamples at every exponent by changing the graph.

Recent work proves path-minimality of total $p$-energy for every $p\ge2$ \cite{LiuTangPathEnergy2026} and path-minimality of positive $p$-energy in several regimes, including all connected graphs at $p=4$ \cite{LiuTang2026}.  Those results sit alongside recent lower bounds and structural inequalities for positive and negative $p$-energies \cite{AkbariEtAl2025,ChenWangZhang2026,TangElphickZhang2026,LiuTangZhang2026}.  Equation \eqref{eq:p4-cert} emphasizes that such global extremal statements do not imply local edge-addition monotonicity.

\section*{Acknowledgements}
OpenAI's ChatGPT was used as a computational and drafting aid in searching for the constructions, deriving the equitable-quotient reductions, checking symbolic algebra, and preparing verification code. All load-bearing inequalities and asymptotic arguments are stated explicitly in the manuscript, and the accompanying script independently reconstructs the finite examples and exact characteristic-polynomial certificates. The author is responsible for the mathematical content and final manuscript.


\begin{thebibliography}{99}
\bibitem{TangLiuWang2026}
Q.~Tang, Y.~Liu, and W.~Wang,
\newblock On the positive and negative $p$-energies of graphs under edge addition,
\newblock \emph{Discrete Appl. Math.} \textbf{388} (2026), 25--33.
\newblock doi:10.1016/j.dam.2026.02.045; arXiv:2410.09830.

\bibitem{TangLiuWang2025v3}
Q.~Tang, Y.~Liu, and W.~Wang,
\newblock On positive and negative $r$-th power energy of graphs with edge addition,
\newblock arXiv:2410.09830v3, revised 26 March 2025.

\bibitem{CioabaGuoSrivastava2021}
S.~M.~Cioab\u{a}, K.~Guo, and N.~Srivastava,
\newblock Spectral graph and hypergraph theory: connections and applications,
\newblock AIM Workshop Summary, American Institute of Mathematics, 2021.
\newblock \href{https://aimath.org/pastworkshops/spectralhypergraphrep.pdf}{Workshop summary}.

\bibitem{AbiadEtAl2023}
A.~Abiad, L.~de Lima, D.~N.~Desai, K.~Guo, L.~Hogben, and J.~Madrid,
\newblock Positive and negative square energies of graphs,
\newblock \emph{Electron. J. Linear Algebra} \textbf{39} (2023), 307--326.
\newblock doi:10.13001/ela.2023.7827.

\bibitem{ElphickEtAl2016}
C.~Elphick, M.~Farber, F.~Goldberg, and P.~Wocjan,
\newblock Conjectured bounds for the sum of squares of positive eigenvalues of a graph,
\newblock \emph{Discrete Math.} \textbf{339} (2016), 2215--2223.
\newblock doi:10.1016/j.disc.2016.01.021.

\bibitem{ElphickLinz2024}
C.~Elphick and W.~Linz,
\newblock Symmetry and asymmetry between positive and negative square energies of graphs,
\newblock \emph{Electron. J. Linear Algebra} \textbf{40} (2024), 418--432.
\newblock doi:10.13001/ela.2024.8447.

\bibitem{AkbariEtAl2025}
S.~Akbari, H.~Kumar, B.~Mohar, and S.~Pragada,
\newblock Vertex partitioning and $p$-energy of graphs,
\newblock \emph{Linear Algebra Appl.} \textbf{724} (2025), 96--107.
\newblock doi:10.1016/j.laa.2025.06.009.

\bibitem{LiuTangZhang2026}
Y.~Liu, Q.~Tang, and S.~Zhang,
\newblock The positive and negative square-energy conjecture,
\newblock arXiv:2607.18031 (2026).

\bibitem{ChenWangZhang2026}
Z.~Chen, Z.~Wang, and X.-D.~Zhang,
\newblock Positive and negative $3$-energies of graphs,
\newblock arXiv:2604.15656 (2026).

\bibitem{TangElphickZhang2026}
C.~Elphick, Q.~Tang, and S.~Zhang,
\newblock A spectral lower bound on chromatic numbers using $p$-energy,
\newblock \emph{European J. Combin.} \textbf{132} (2026), 104252.
\newblock doi:10.1016/j.ejc.2025.104252.

\bibitem{Nikiforov2012}
V.~Nikiforov,
\newblock Extremal norms of graphs and matrices,
\newblock \emph{J. Math. Sci.} \textbf{182} (2012), 164--174.
\newblock doi:10.1007/s10958-012-0737-z.

\bibitem{Nikiforov2016}
V.~Nikiforov,
\newblock Beyond graph energy: Norms of graphs and matrices,
\newblock \emph{Linear Algebra Appl.} \textbf{506} (2016), 82--138.
\newblock doi:10.1016/j.laa.2016.05.011.

\bibitem{ArizmendiGuerrero2023}
O.~Arizmendi and J.~Guerrero,
\newblock On the $p$-Schatten energy of bipartite graphs,
\newblock \emph{Acta Math. Hungar.} \textbf{169} (2023), 503--509.
\newblock doi:10.1007/s10474-023-01319-5.

\bibitem{LiuTangPathEnergy2026}
Y.~Liu and Q.~Tang,
\newblock Path-minimality of $p$-energy for connected graphs,
\newblock arXiv:2605.22730 (2026).

\bibitem{LiuTang2026}
Y.~Liu and Q.~Tang,
\newblock Path-minimality for positive $p$-energies, Laplacian-type spectra, and line graphs,
\newblock arXiv:2606.30996 (2026).
\newblock doi:10.48550/arXiv.2606.30996.

\bibitem{HornJohnson2013}
R.~A.~Horn and C.~R.~Johnson,
\newblock \emph{Matrix Analysis}, 2nd ed.,
\newblock Cambridge University Press, Cambridge, 2013.

\bibitem{CvetkovicEtAl2010}
D.~Cvetkovi\'c, P.~Rowlinson, and S.~Simi\'c,
\newblock \emph{An Introduction to the Theory of Graph Spectra},
\newblock Cambridge University Press, Cambridge, 2010.
\end{thebibliography}
\end{document}